\DeclareMathAlphabet{\mathbbold}{U}{bbold}{m}{n}
\def\th@plain{%
	\thm@notefont{}% same as heading font
	\itshape % body font
}
\def\th@definition{%
	\thm@notefont{}% same as heading font
	\normalfont % body font
}
\theoremstyle{plain}
        \newtheorem{theorem}{Theorem}[section]
        \newtheorem*{theorem*}{Theorem}
        \newtheorem*{theoremM}{Main Theorem}
        \newtheorem{lemma}[theorem]{Lemma}
        \newtheorem{cor}[theorem]{Corollary}
\theoremstyle{definition}
\theoremstyle{remark}
\newtheoremstyle{maintheorem}%
  {6pt}
  {6pt}% 
  {\itshape}
  {}
  {\bfseries}
  {.}
  { }
  {\thmname{#1}\thmnumber{ #2}\thmnote{ {\notefont(#3)}}}
\numberwithin{equation}{section}
\numberwithin{theorem}{section}
\numberwithin{table}{section}
\def\th@plain{%
	\thm@notefont{}% same as heading font
	\itshape % body font
}
\def\th@definition{%
	\thm@notefont{}% same as heading font
	\normalfont % body font
}
\renewcommand{\le}{\leqslant}
\renewcommand{\leq}{\leqslant}
\renewcommand{\ge}{\geqslant}
\renewcommand{\geq}{\geqslant}
\newcommand{\Var}{\operatorname{Var}}
\newcommand{\id} {\operatorname{id}}
\newcommand{\Fix}{\mathrm{Fix}}
\newcommand{\eps}{\varepsilon}
\newcommand{\R}{\mathbb{R}}
\newcommand{\C}{\mathbb{C}}      
\newcommand{\Pillow}{\mathbb{P}}
\newcommand{\N}{\mathbb{N}}      
\newcommand{\Z}{\mathbb{Z}}      
\newcommand{\T}{\mathbb{T}}      % Torus
\renewcommand{\hat}{\widehat}
\providecommand{\abs}[1]{\lvert#1\rvert}
\providecommand{\Absbig}[1]{\bigl\lvert#1\bigr\rvert}
\providecommand{\Absbigg}[1]{\biggl\lvert#1\biggr\rvert}
\providecommand{\AbsBig}[1]{\Bigl\lvert#1\Bigr\rvert}
\providecommand{\norm}[1]{\|#1\|}
\providecommand{\Normbig}[1]{\bigl\|#1\bigr\|}
\newcommand{\CC}{\mathcal{C}}
\newcommand{\bigO}{\mathcal{O}}
\renewcommand{\=}{\coloneqq}
\newcommand{\cC}{\mathcal{C}}
\newcommand{\cE}{\mathcal{E}}
\newcommand{\cL}{\mathcal{L}}
\newcommand{\cO}{\mathcal{O}}
\newcommand{\bP}{\mathbb P}
\newcommand{\bbp}{\mathbbm{p}}
\newcommand{\vf}{\varphi}
\newcommand\Id{{\mathbbm{1}}}
\newcommand{\bE}{\mathbb{E}}
\title[Normal distribution of periodic Lyapunov exponents]{Normal distribution of Lyapunov exponents of periodic orbits for expanding circle maps}
\begin{document}

	\author[K.~Drach, Z.~Fu, V.~Kaloshin, Z.~Li, C.~Liverani]{Kostiantyn~Drach \and Zhi~Fu \and Vadim~Kaloshin \and Zhiqiang~Li \and Carlangelo~Liverani}
    
%\begin{comment}

	\address{Kostiantyn~Drach, Universitat de Barcelona, Gran Via de les Corts Catalanes, 585, 08007 Barcelona, Spain \newline 
    \indent Centre de Recerca Matemàtica, Edifici C, Carrer de l'Albareda, 08193 Bellaterra, Spain}
	\email{kostiantyn.drach@ub.edu}
    
	\address{Zhi~Fu, School of Mathematical Sciences, Peking University, Beijing 100871, China}
	\email{zhifu@stu.pku.edu.cn}

	\address{Vadim~Kaloshin, Institute of Science and Technology Austria, Am Campus 1, 3400 Klosterneuburg, Austria}
	\email{vadim.kaloshin@gmail.com}
    
	\address{Zhiqiang~Li, School of Mathematical Sciences \& Beijing International Center for Mathematical Research, Peking University, Beijing 100871, China}
	\email{zli@math.pku.edu.cn}

    \address{Carlangelo~Liverani, Dipartimento di Matematica, II Universit\`a di Roma (Tor Vergata), Via della Ricerca Scientifica, 00133 Roma, Italy}
	\email{liverani@mat.uniroma2.it}

%\end{comment}
 %   \address{Department of Mathematics, William E. Kirwan Hall, 4176 Campus Dr, University of Maryland, College Park, MD 20742, USA}
  %  \email{cliveran@umd.edu}

	\subjclass[2020]{Primary: 37D20, 37D35; Secondary: 60F05, 37E10}
	
	\keywords{Expanding maps; Lyapunov exponents; periodic points; length spectrum; multipliers; normal distribution}

	\thanks{ZF and ZL are partially supported by the Beijing Natural Science Foundation (JQ25001) and the National Natural Science Foundation of China (12471083, 12090010, 12090015). KD is partially supported by the Agencia Estatal de Investigación (PID2023-147252NB-I00) and by the Severo Ochoa–María de Maeztu Program for Centers and Units of Excellence (CEX2020-001084-M). VK and KD acknowledge the support from the ERC Advanced Grant SPERIG (\#885707). CL was supported by the PRIN Grants 2017S35EHN and 2022NTKXCX, and by the MIUR Excellence Department Project Math@TOV awarded to the Department of Mathematics, University of Rome Tor Vergata. CL also acknowledges membership in the GNFM/INDAM.}
    
\begin{abstract}
For a smooth expanding circle map, we show that the empirical distribution of Lyapunov exponents of periodic points of any fixed period is close to normal, with an error that decreases as the period grows. This establishes a version of the Central Limit Theorem for such finite periodic orbits.
\end{abstract}

\maketitle

\setcounter{tocdepth}{2}

\enlargethispage{2\baselineskip}
    \vspace*{1\baselineskip}
    \thispagestyle{empty}

%	\tableofcontents

\section{Main result}
\label{sec_expanding_maps}
In this article, for a smooth orientation-preserving expanding circle map $f \colon \T \to \T$, we show that the Lyapunov exponents of its periodic orbits of any \emph{fixed} period distribute normally, with a small error depending polynomially on the period. This result is supported by several computer experiments illustrated in Figure~\ref{Fig:Exp}.

Let $\Fix(f^n)$ be the set of periodic points of period $n$ (we allow for it to be not the smallest period). If $K \ge 2$ is the \emph{degree} of $f$, then $\Fix(f^n)$ contains exactly $K^n-1$ elements. For $p \in \Fix(f^n)$, define
\begin{equation*}
\chi_n(p) 
\= \frac{1}{n} \ln (f^n)'(p) 
= \frac{1}{n} \sum_{k=0}^{n-1} \bigl( \ln f' \circ f^k \bigr) (p)
\end{equation*}
to be the \emph{(periodic) Lyapunov exponent} of $p$. This value does not depend on the choice of a point in the orbit of $p$. Note that $\exp(n \cdot \chi_n(p))$ is the \emph{multiplier} of the periodic orbit $\bigl\{p, \, f(p), \, \ldots, \, f^{n-1}(p) \bigr\}$, and $n \cdot \chi_n(p)$ is the \emph{length} of that orbit \cite{McM10}. 

\begin{comment}

\begin{figure}[ht]
\captionsetup{font=footnotesize}
  \centering
  % Subfigure (A) - Left side
  \begin{subfigure}[b]{0.49\textwidth}
    \includegraphics[width=\textwidth, trim=50 60 50 80, clip]{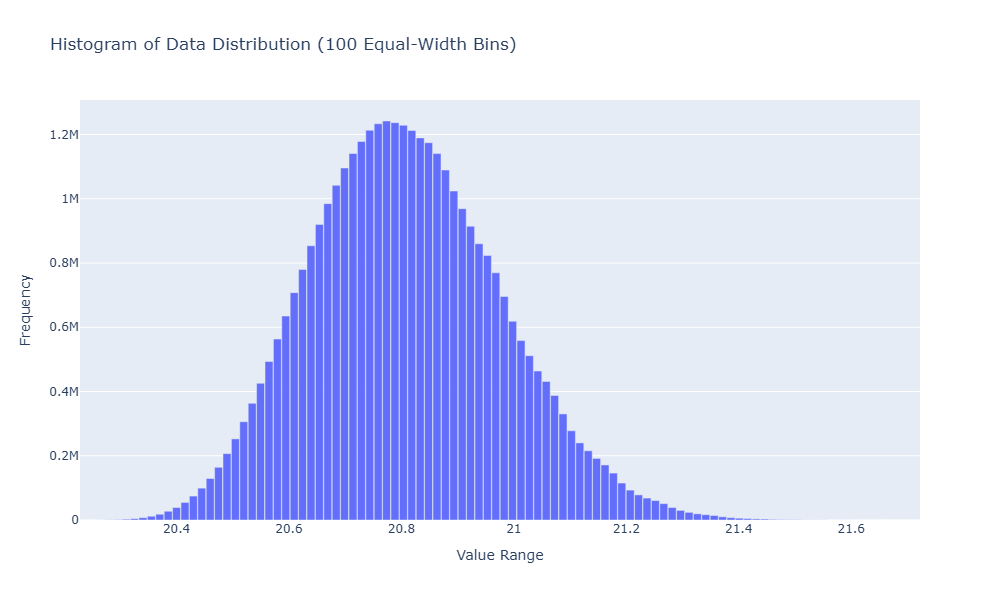}
    \caption{}
    \label{fig:A}
  \end{subfigure}
  \hfill % Adds horizontal space between the two subfigures
  % Subfigure (B) - Right side
  \begin{subfigure}[b]{0.49\textwidth}
    \includegraphics[width=\textwidth, trim=50 60 50 80, clip]{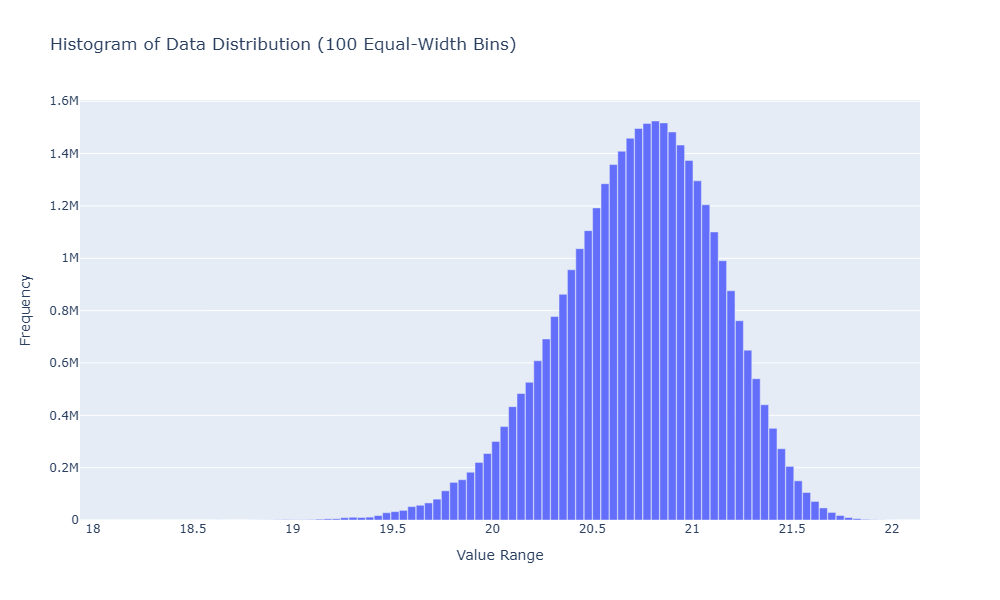}
    \caption{}
    \label{fig:B}
  \end{subfigure}
  \caption{The empirical distributions of the Lyapunov exponents for periodic orbits of period $30$, using $100$ equal-width bins, where the map is:
  \newline
  \hspace*{1em}(A) $x \mapsto 2x+0.01 \big(\sin(2\pi x) + \cos(2\pi x) - 1\big) \mod 1$; 
  \newline
  \hspace*{1em}(B) the Blaschke product $z \mapsto z (z-0.1)/(1-0.1 z)$ restricted to the unit circle; in this case, the corresponding expanding circle map preserves the Lebesgue measure on the circle. }
  \label{Fig:Exp}
\end{figure}
\end{comment}

%\begin{comment}
\begin{figure}[ht]
\captionsetup{font=footnotesize}
  \centering
  % Subfigure (A)
  \begin{subfigure}[b]{0.7\textwidth}
    \includegraphics[width=\textwidth, trim=50 60 50 80, clip]{SinCos30.png}
    \caption{}
    \label{fig:A}
  \end{subfigure}
  
 % \vspace{0.5em} % space between subfigures
  
  % Subfigure (B)
  \begin{subfigure}[b]{0.7\textwidth}
    \includegraphics[width=\textwidth, trim=50 60 50 80, clip]{Blaschke30.png}
    \caption{}
    \label{fig:B}
  \end{subfigure}

  \caption{The empirical distributions of the Lyapunov exponents for periodic orbits of period $30$, using $100$ equal-width bins, where the map is:
  \newline
  \hspace*{1em}(A) $x \mapsto 2x+0.01 \big(\sin(2\pi x) + \cos(2\pi x) - 1\big) \mod 1$; 
  \newline
  \hspace*{1em}(B) the Blaschke product $z \mapsto z (z-0.1)/(1-0.1 z)$ restricted to the unit circle; in this case, the corresponding expanding circle map preserves the Lebesgue measure on the circle. }
  \label{Fig:Exp}
\end{figure}
%\end{comment}

For a fixed period $n$, we treat $\chi_n$ as a random variable in $\R$ distributed with the probability
\begin{equation}
\label{Eq:Prob}
\Pillow(\chi_n \in [x,y]) 
\= \frac{\sharp \{p \in \Fix(f^n) : \chi_n(p) \in [x,y]\}}{\sharp \Fix(f^n)},
\end{equation}
where $\sharp A$ stands for the number of elements in a set $A$. 

Let us also denote by $\mu$ the unique invariant probability measure of \emph{maximal entropy} on $\T$ for $f$.

The main result of this article is the following theorem:

\begin{theoremM}[Distribution of periodic Lyapunov exponents]
    Let $f \in \cC^r(\T,\T)$, $r \ge 2$, be a $\cC^r$-smooth orientation-preserving expanding circle map that is not smoothly conjugate to the linear map. Denote by $\overline{\chi}\=\int_{\T} \! \ln f' \, \mathrm{d}\mu$ the Lyapunov exponent of $f$ with respect to the measure of maximal entropy. Then there exist positive constants $\sigma>0$ and $ C > 0$ such that for all $n \in \N$ and $a, b \in \R \cup \{+\infty\}$ with $a<b$, we have
    \begin{equation*}
    \Absbigg{\Pillow\biggl(\frac{(\chi_n - \overline{\chi})\sqrt{n}}{\sigma} \in [a ,b]\biggr) - \frac{1}{\sqrt{2\pi}} \int_a^b \! \exp\biggl(-\frac{x^2}{2}\biggr) \, \mathrm{d}x} \le \frac{C}{\sqrt[4]{n}}.
    \end{equation*}
\end{theoremM}

A closely related result about lengths of closed geodesics on hyperbolic surfaces was obtained by Gekhtman, Taylor, and Tiozzo in \cite{GTT19}, giving a positive answer to the conjecture of Chas--Li--Maskit \cite{CLM13}. More specifically, in \cite{GTT19} the authors establish a version of the Central Limit Theorem (without the error term), as $n \to +\infty$, for the empirical distribution of the lengths of closed geodesics whose combinatorial length (in terms of word length in the generators of the fundamental group) is bounded above by $n$.

Taken together, our main result and the result of \cite{GTT19} provide a new instance of the parallel between the properties of the lengths (Lyapunov exponents) of periodic orbits for expanding circle maps and the behavior of the lengths of closed geodesics on compact hyperbolic surfaces of genus $g \ge 2$. This analogy (``dictionary'') was put forward by McMullen \cite{McM10} in the setting of expanding circle maps arising from Blaschke products.

\section{Preliminaries}

Let $f\in\cC^r(\T, \T)$, $r \ge 2$, $\T = \R/\Z$, be an expanding circle map not smoothly conjugate to a linear map, and $\mathbb P_n^*$ be the partition of $\T$ into half-open intervals on which $f^n$ is invertible. We choose this partition in such a way that $f^n(I) = \T$ for every partition interval $I$. In this way, $\mathbb P_n^*$ is a set of $K^n$ intervals and for every $z \in \T$ each interval $\bbp \in \bP_n^*$ contains exactly one point from the set $f^{-n}(z)$. Furthermore, all but one interval $\bbp \in \bP_n^*$ has a unique point $x_{\bbp} \in \Fix(f^n)$ that corresponds to a periodic orbit of period $n$. Let us denote the exceptional interval $\bbp^*$ and denote the set of non-exceptional intervals by $\bP_n = \bP^*_n \setminus \{\bbp^*\}$. Note that $\sharp \bP_n = K^n-1= \sharp\Fix(f^n)$. 

For each $\bbp \in \bP_n$, we denote by $\vf_\bbp$ the well-defined inverse branch of $f^n|_{\bbp}$. And we denote by $\vf_{\bbp^*}$ the corresponding inverse branch for the exceptional interval $\bbp^*$.

We will also fix the notation 
\begin{equation*}
h(x)\=\ln f'(x) \quad \quad\text{ and }\quad \quad
       g_n(x) \=\frac 1n\sum_{k=0}^{n-1} \bigl( h\circ f^k \bigr) (x), \quad x\in \T.
\end{equation*}
Then  $\chi_n(p)=g_n(p)$ for each $p\in\Fix(f^n)$. We want to study the empirical distribution of ${\chi}_n$ treated as a random variable given by \eqref{Eq:Prob}. For such a random variable, and for each continuous function $\psi\in\cC^0(\R,\R)$, we have the expectation
\begin{equation}
    \label{eq:object}
    \bE(\psi(\chi_n)) =\frac{\sum_{p \in \Fix(f^n)} \psi(\chi_n(p))}{\sharp \Fix(f^n)} = \frac{\sum_{\bbp\in \Pillow_n}\psi(g_n(x_\bbp))}{\sharp \Pillow_n}.
\end{equation}

\medskip
To simplify our notation, in what follows, we will write $X_n = Y_n +\cO(Z_n)$ if there is a constant $C > 0$, that may depend only on the function $f$ (and hence independent of $n$), such that $\abs{X_n - Y_n} \le C \cdot Z_n$. 

Finally, we will denote $\|\cdot\|_{\cC^k}$, $k=0,\,1,\,\dots$, the $\cC^k$-norm on $\T$.

\section{The average}
Our goal is to compute \eqref{eq:object} for $\psi = \id$. To this end, we want to compare it with the transfer operator
\begin{equation}
    \label{eq:transfer_operator}
    \cL \colon \cC^1(\T, \R) \to \cC^1(\T, \R), \quad \quad
    \cL (\vf)(x)\=\sum_{y\in f^{-1}(x)}\varphi(y).
\end{equation}

\begin{lemma}
    \label{lem:E(chi_n)}
    For every $z\in \T$ and every integer $n \geq1$,
    \begin{equation*}
        \bE(\chi_n) = (K^n-1)^{-1} \cdot \cL^n (h)(z) + \cO(K^{-n}).
    \end{equation*}
\end{lemma}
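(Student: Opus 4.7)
The plan is to compare $\sum_{p\in\Fix(f^n)} g_n(p)$ to $\cL^n(h)(z)$ modulo an absolute $\cO(1)$ error, and then divide by $\sharp \Fix(f^n) = K^n-1$ to conclude. This proceeds in two independent reductions: first collapsing the Birkhoff sum $g_n$ into a single evaluation of $h$, and second comparing values of $h$ at $x_\bbp$ with values at the ``probe'' points $\vf_\bbp(z)$ inside the same partition interval.

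\emph{Step 1 (collapsing the Birkhoff average).} First I would observe that $f$ restricts to a bijection of $\Fix(f^n)$ onto itself, with inverse $f^{n-1}\big|_{\Fix(f^n)}$. Consequently, for every $k\in\{0,1,\ldots,n-1\}$, a change of variables $q = f^k(p)$ gives
\begin{equation*}
\sum_{p \in \Fix(f^n)} h(f^k(p)) = \sum_{q \in \Fix(f^n)} h(q).
\end{equation*}
Averaging over $k$ yields
\begin{equation*}
\sum_{p \in \Fix(f^n)} g_n(p) = \sum_{p \in \Fix(f^n)} h(p) = \sum_{\bbp \in \bP_n} h(x_\bbp).
\end{equation*}

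\emph{Step 2 (comparing $x_\bbp$ to $\vf_\bbp(z)$).} Fix any $z\in\T$. For each $\bbp\in\bP_n$, both $x_\bbp$ (the fixed point of $\vf_\bbp$) and $\vf_\bbp(z)$ lie in the interval $\bbp$, so $|x_\bbp - \vf_\bbp(z)| \leq |\bbp|$ and therefore
\begin{equation*}
|h(x_\bbp) - h(\vf_\bbp(z))| \leq \|h'\|_{\cC^0} \cdot |\bbp|.
\end{equation*}
Since the intervals of $\bP_n^*$ partition $\T$, one has $\sum_{\bbp\in\bP_n^*}|\bbp|=1$, hence summing gives
\begin{equation*}
\sum_{\bbp \in \bP_n} h(x_\bbp) = \sum_{\bbp \in \bP_n} h(\vf_\bbp(z)) + \cO(1).
\end{equation*}
By definition of $\cL$, and separating the exceptional branch,
\begin{equation*}
\cL^n(h)(z) = \sum_{y\in f^{-n}(z)} h(y) = \sum_{\bbp \in \bP_n^*} h(\vf_\bbp(z)) = \sum_{\bbp \in \bP_n} h(\vf_\bbp(z)) + h(\vf_{\bbp^*}(z)),
\end{equation*}
and the last term is $\cO(1)$ as $\|h\|_{\cC^0}$ is finite.

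\emph{Conclusion.} Combining the two steps, $\sum_{p\in\Fix(f^n)} g_n(p) = \cL^n(h)(z) + \cO(1)$, and dividing by $\sharp\Fix(f^n) = K^n-1$ produces the stated identity with error $\cO(K^{-n})$.

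\emph{Main obstacle.} There is no deep obstacle: the only conceptual ingredient is recognizing that $f$ acts as a bijection on $\Fix(f^n)$, which permits replacing the time-average $g_n$ by $h$ without any dynamical or spectral input. All further error control is bookkeeping that relies only on $h\in \cC^1$ and $\sum_{\bbp\in\bP_n^*}|\bbp|=1$; expansion enters only through the existence of the partition $\bP_n^*$.
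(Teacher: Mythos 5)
Your proof is correct and follows essentially the same route as the paper: collapse the Birkhoff average $g_n$ to $h$ over $\Fix(f^n)$ (you make the bijection argument explicit where the paper states it implicitly), then compare $\sum_\bbp h(x_\bbp)$ with $\cL^n(h)(z)$ branch-by-branch using $h \in \cC^1$ and $\sum_\bbp |\bbp| = 1$, isolating the exceptional branch and finally dividing by $K^n-1$. No discrepancy with the paper's argument.
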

\begin{proof}
    We have 
    \begin{equation*}
        \label{eq:sum_of_g_n_equal_to_sum_of_h}
        \sum_{\bbp\in\Pillow_n}g_n(x_\bbp)=
        \sum_{\bbp\in \bP_n}\frac 1n\sum_{k=0}^{n-1}h \bigl(f^k(x_\bbp) \bigr) = 
        \sum_{\bbp\in\Pillow_n}h(x_\bbp).
    \end{equation*}
    Thus, 
    \begin{align*}
        &\AbsBig{\cL^n (h)(z)-\sum_{\bbp\in \Pillow_n}g_n(x_\bbp)}
        =\AbsBig{\sum_{y\in f^{-n}(z)} h(y)-\sum_{\bbp\in \Pillow_n}h(x_\bbp)}\notag \\
        &\qquad\leq\abs{h(\varphi_{\bbp^*}(z))}+\sum_{\bbp\in\Pillow_n}\abs{(h\circ\varphi_{\bbp})(z)-h(x_\bbp)} \leq \|h\|_{\CC^0} + \|h'\|_{\CC^0}\sum_{\bbp\in\Pillow_n}\abs{\varphi_{\bbp}(z)-x_\bbp}\notag \\
        &\qquad\leq \norm{h}_{\CC^0} + \norm{h'}_{\CC^0} \cdot \sum_{\bbp\in\Pillow_n} \abs{\bbp} \leq \norm{h}_{\CC^0} + \norm{h'}_{\CC^0}.
    \end{align*}
    Since $\bE(\chi_n) = (K^n-1)^{-1} \sum_{\bbp \in \bP_n} g_n(x_\bbp)$, we conclude that $\bE(\chi_n) = (K^n-1)^{-1}\cL^n (h)(z) + \cO(K^{-n})$.
\end{proof}

We introduce the spectral decomposition for the transfer operator \eqref{eq:transfer_operator} in the following lemma (see e.g.\, \cite{DKL21}, \cite[Lemma 3.1]{BKL22}).
\begin{lemma}
    \label{lem:spectral_decomposition}
    For each function $\vf \in \CC^1(\T,\R)$,
    \begin{equation*}
        \cL (\varphi)= K \cdot \mu(\varphi) + Q(\varphi), 
    \end{equation*}
    where $\mu(\vf) \= \int_\T \!\vf(x) \,\mathrm{d}\mu(x)$, with $\mu$ being the measure of maximal entropy. Furthermore, $Q(\mathbbm{1})=0$, $\mu(Q(\vf))=0$, and there exists a constant $C>0$ such that for every integer $n\geq1$,
    \begin{equation*}
        \|Q^n(\varphi)\|_{\CC^1}\leq C\|\varphi\|_{\CC^1}.
    \end{equation*} %\qed
\end{lemma}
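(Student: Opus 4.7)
The plan is to invoke the standard Ruelle--Perron--Frobenius theory for uniformly expanding maps, as in the references cited just before the lemma. The decomposition itself is essentially a matter of algebra; the nontrivial content is the uniform $\CC^1$ bound on $Q^n$, which I would establish directly via bounded distortion rather than through abstract spectral-gap arguments.

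For the algebraic identities: since $f$ has topological degree $K$, every point has exactly $K$ preimages, so $\cL(\mathbbm{1}) = K\mathbbm{1}$. The measure of maximal entropy $\mu$ is characterized by $\cL^*\mu = K\mu$, that is, $\mu(\cL\vf) = K\mu(\vf)$ for every continuous $\vf$; this follows from the thermodynamic formalism for the zero potential, whose topological pressure equals $\log K$. Setting $P(\vf) \= \mu(\vf)\mathbbm{1}$, these identities yield $\cL P = P\cL = KP$, so $Q \= \cL - KP$ satisfies $QP = PQ = 0$, and consequently $Q^n = \cL^n - K^n P$ for every $n \ge 1$. The relations $Q(\mathbbm{1}) = 0$ and $\mu(Q^n\vf) = 0$ are then immediate.

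Differentiating $\cL^n\vf(x) = \sum_{\bbp \in \bP_n^*} \vf(\vf_\bbp(x))$ gives
\[
(\cL^n\vf)'(x) \;=\; \sum_{\bbp \in \bP_n^*} \vf'(\vf_\bbp(x))\,\vf_\bbp'(x).
\]
Since $f \in \CC^r$ with $r \ge 2$ is uniformly expanding, the inverse branches of $f^n$ satisfy a classical uniform distortion estimate: there exists $C_0 > 0$ (depending only on $f$) such that $\vf_\bbp'(x) \le C_0\, \vf_\bbp'(y)$ for all $x,y \in \T$, all $n \ge 1$, and all $\bbp \in \bP_n^*$. Integrating in $y$ over $\T$ yields $\vf_\bbp'(x) \le C_0 \abs{\bbp}$, and summing over $\bbp$ gives $\sum_{\bbp}\vf_\bbp'(x) \le C_0$ uniformly. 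Consequently $\|(\cL^n\vf)'\|_{\CC^0} \le C_0 \|\vf'\|_{\CC^0}$, and since $Q^n\vf$ differs from $\cL^n\vf$ by the constant $K^n\mu(\vf)$, we have $(Q^n\vf)' = (\cL^n\vf)'$ and the same bound holds.

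Finally, since $\mu(Q^n\vf) = 0$ and $\mu$ is a probability measure, the continuous function $Q^n\vf$ cannot be of constant sign, so it vanishes at some $x_0 \in \T$. Hence for every $x \in \T$,
\[
\abs{Q^n\vf(x)} \;=\; \abs{Q^n\vf(x) - Q^n\vf(x_0)} \;\le\; \|(Q^n\vf)'\|_{\CC^0} \;\le\; C_0\|\vf\|_{\CC^1},
\]
which combined with the derivative bound yields $\|Q^n\vf\|_{\CC^1} \le 2C_0\|\vf\|_{\CC^1}$, as required. The main technical input is the uniform bounded-distortion estimate; it is standard for $\CC^2$ uniformly expanding maps and comes from the chain-rule identity $-\log\vf_\bbp'(x) = \sum_{k=0}^{n-1}\log f'(f^k(\vf_\bbp(x)))$, summed as a geometric series using the exponential contraction $|\vf_\bbp'| \lesssim \lambda^{-n}$ and the Lipschitz regularity of $\log f'$.
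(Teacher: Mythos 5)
Your proof is correct, and it takes a genuinely different route from the paper, which does not prove this lemma at all but simply cites [DKL21] and [BKL22, Lemma 3.1], where the statement is obtained from the quasi-compactness (spectral gap) of the normalized transfer operator on $\CC^1(\T)$. You instead prove the uniform bound on $\|Q^n\|_{\CC^1}$ directly from bounded distortion. The algebraic part is clean and standard: $\cL\mathbbm{1}=K\mathbbm{1}$ and $\cL^*\mu = K\mu$ (the latter because the measure of maximal entropy is the conformal measure of the zero potential, for which the eigenfunction is $\mathbbm{1}$), so with $P(\vf)=\mu(\vf)\mathbbm{1}$ one gets $\cL P = P\cL = KP$, $P^2=P$, $PQ=QP=0$, hence $Q^n=\cL^n-K^nP$ and the identities $Q(\mathbbm{1})=0$, $\mu(Q^n\vf)=0$. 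The analytic part is also sound: the distortion bound $\vf_\bbp'(x)\le C_0\vf_\bbp'(y)$ (valid for $\CC^2$, indeed $\CC^{1+\alpha}$, expanding maps via the geometric-series argument you sketch at the end) gives $\sum_\bbp \vf_\bbp'(x)\le C_0$, hence $\|(\cL^n\vf)'\|_{\CC^0}\le C_0\|\vf'\|_{\CC^0}$; since $P\vf$ is constant, $(Q^n\vf)'=(\cL^n\vf)'$; and since $\mu(Q^n\vf)=0$ forces $Q^n\vf$ to vanish somewhere on $\T$, the sup norm is controlled by the derivative norm, giving $\|Q^n\vf\|_{\CC^1}\le 2C_0\|\vf\|_{\CC^1}$.

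The one thing worth flagging: the spectral-gap approach in the cited references actually gives the stronger estimate $\|Q^n\vf\|_{\CC^1}\le C\theta^n\|\vf\|_{\CC^1}$ for some $\theta<1$ (exponential decay), not merely uniform boundedness. Your argument only yields the uniform bound. For the lemma as stated this is exactly what is claimed, and for the way the paper uses it (dividing $\cL^n=K^n\mu(\cdot)+Q^n$ by $K^n$, so the error decays like $K^{-n}$ regardless) uniform boundedness of $Q^n$ suffices. So your more elementary route is adequate here, at the cost of forgoing the sharper spectral-gap information that the standard machinery would provide.
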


Consequently, for each integer $n\geq1$, $\cL^n (\vf)=K^n \cdot \mu(\vf) + Q^n(\varphi)$. Thus the iterations of the normalized operator $K^{-n} \cdot \cL^n(\vf)$ converge to $\mu(\vf)$ exponentially fast (as $n \to +\infty$). Hence Lemma~\ref{lem:E(chi_n)} yields the following corollary. Before stating it, let us introduce the following centered quantities:
\begin{equation}
\label{Eq:Center}
\overline{\chi} \= \mu(h) = \int_{\T}\! \ln f'(x) \,\mathrm{d} \mu(x), \quad \quad \hat h \= h - \mu(h), \quad \quad \hat \chi_n \= \chi_n - \overline{\chi}.
\end{equation}

\begin{cor}[Centered expectation estimate] 
\label{Cor:1}
For every integer $n \ge 1$,
\begin{equation}
\label{eq:expression_of_expectation}
\bE\big(\chi_n\big)=\mu(h)+\cO(K^{-n}), \quad \quad \bE\big(\hat \chi_n\big)=\cO(K^{-n}). 
\end{equation}
In particular, 
\begin{equation*}
\lim_{n \to +\infty} \bE(\chi_n) = \mu(h), \quad \quad \lim_{n \to +\infty} \bE(\hat \chi_n) = 0.
\end{equation*}%\qed
\end{cor}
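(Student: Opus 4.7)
The plan is to combine the two preceding lemmas in a direct algebraic manipulation; no new analytic input is required. First I would invoke Lemma~\ref{lem:E(chi_n)} with any fixed $z \in \T$ to write
\begin{equation*}
\bE(\chi_n) = (K^n-1)^{-1} \cdot \cL^n(h)(z) + \cO(K^{-n}).
\end{equation*}
Then I would apply the spectral decomposition from Lemma~\ref{lem:spectral_decomposition}, which iterated gives $\cL^n(h) = K^n \cdot \mu(h) + Q^n(h)$, and substitute this into the previous line.

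Next, I would simplify the two resulting terms separately. For the leading term, note the identity $(K^n-1)^{-1} \cdot K^n = 1 + (K^n-1)^{-1} = 1 + \cO(K^{-n})$, so $(K^n-1)^{-1} K^n \mu(h) = \mu(h) + \cO(K^{-n})$ (the constant in the $\cO$ absorbs $|\mu(h)| \le \|h\|_{\cC^0}$). For the remainder, Lemma~\ref{lem:spectral_decomposition} gives $\|Q^n(h)\|_{\cC^1} \le C\|h\|_{\cC^1}$ uniformly in $n$, hence $|Q^n(h)(z)| \le C\|h\|_{\cC^1}$, which yields $(K^n-1)^{-1} Q^n(h)(z) = \cO(K^{-n})$. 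Adding everything up gives $\bE(\chi_n) = \mu(h) + \cO(K^{-n})$, the first equality of \eqref{eq:expression_of_expectation}.

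The second equality follows by linearity of expectation: since $\hat\chi_n = \chi_n - \overline{\chi}$ and $\overline{\chi} = \mu(h)$ by definition \eqref{Eq:Center}, we have $\bE(\hat\chi_n) = \bE(\chi_n) - \mu(h) = \cO(K^{-n})$. Finally, the limits stated in the corollary are immediate from $K \ge 2$, since $K^{-n} \to 0$ as $n \to +\infty$.

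There is essentially no obstacle here: the corollary is a direct consequence of the two previous lemmas, and the only small care needed is to keep track of the factor $(K^n-1)^{-1}$ versus $K^{-n}$, which differ only by a term of order $\cO(K^{-n})$ and therefore contribute nothing new to the error.
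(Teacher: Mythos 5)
Your proof is correct and matches the paper's intended argument: the corollary is stated in the paper as an immediate consequence of Lemma~\ref{lem:E(chi_n)} combined with the iterated spectral decomposition $\cL^n(h) = K^n \mu(h) + Q^n(h)$ and the uniform bound on $Q^n$. The bookkeeping you perform with $(K^n-1)^{-1}$ versus $K^{-n}$ is exactly the right care to take, and the rest follows as you describe.
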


For the rest of the paper, we focus on the centered random variable $\hat \chi_n$. 

%%%%%%%%%%%%%%%%%%%%%%%%%%%%%%%%%%%%%%%%%%%%%%%%%%%%%%%%%%%%%%%%%%%%%%%%%%%%%%%%%%%%%%%%%%%%%
%%%%%%%%%%%%%%%%%%%%%%%%%%%%%%%%%%%%%%%%%%%%%%%%%%%%%%%%%%%%%%%%%%%%%%%%%%%%%%%%%%%%%%%%%%%%%
\section{The variance}
%%%%%%%%%%%%%%%%%%%%%%%%%%%%%%%%%%%%%%%%%%%%%%%%%%%%%%%%%%%%%%%%%%%%%%%%%%%%%%%%%%%%%%%%%%%%%
%%%%%%%%%%%%%%%%%%%%%%%%%%%%%%%%%%%%%%%%%%%%%%%%%%%%%%%%%%%%%%%%%%%%%%%%%%%%%%%%%%%%%%%%%%%%%

In this section, we want to estimate the variance $\Var(\hat \chi_n) \= \bE\big([\hat \chi_n - \bE(\hat \chi_n)]^2\big) = \bE\big([\chi_n - \bE(\chi_n)]^2\big)$. The key result of this section is the following lemma:

\begin{lemma}[Centered variance estimate]
\label{Lem:Variance}
There exists $\sigma > 0$ such that for every integer $n \ge 1$,
\begin{equation}
    \label{eq:upper_bound_of_the_variance}
    \Var(\hat \chi_n) = n^{-1} \cdot \sigma^2 + \cO\bigl(n^{-2}\bigr).
\end{equation}        
In particular, 
\begin{equation*}
\Var(\sqrt{n} \cdot \hat \chi_n) = \sigma^2+\cO \bigl(n^{-1} \bigr), \quad \quad \lim_{n \to +\infty} \Var \bigl( \sqrt{n} \cdot \hat \chi_n \bigr) = \sigma^2.
\end{equation*}
\end{lemma}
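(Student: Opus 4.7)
The plan is to compute $\bE(\hat\chi_n^2)$ directly; since $\Var(\hat\chi_n)=\bE(\hat\chi_n^2)-\bE(\hat\chi_n)^2$ and Corollary~\ref{Cor:1} already bounds the second term by $\cO(K^{-2n})$, only the first term requires work. Set $\Phi_m(x)\=\hat h(x)\hat h(f^m(x))$. Using the cyclic structure $f^n(p)=p$ to re-index pairs by $m\=(j-k)\bmod n$, the Birkhoff square rewrites as
\[
S_n\hat h(p)^2=\sum_{k,j=0}^{n-1}\hat h(f^k(p))\hat h(f^j(p))=\sum_{m=0}^{n-1}S_n\Phi_m(p).
\]
Since $f$ permutes $\Fix(f^n)$, averaging gives $\bE(S_n\Phi_m)=n\bE(\Phi_m)$, hence $\bE(\hat\chi_n^2)=\tfrac{1}{n}\sum_{m=0}^{n-1}\bE(\Phi_m)$.

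A crucial structural fact is the orbit symmetry $\bE(\Phi_{n-m})=\bE(\Phi_m)$: since $f^{n-m}$ restricts to a bijection on the finite set $\Fix(f^n)$, the substitution $q=f^{n-m}(p)$ turns $\hat h(p)\hat h(f^{n-m}(p))$ into $\hat h(f^m(q))\hat h(q)=\Phi_m(q)$. This is indispensable because approximating $\bE(\Phi_m)$ via the transfer operator requires control of $\|\Phi_m\|_{\cC^1}$, which grows like $\|(f^m)'\|_{\cC^0}$ and can be exponential in $m$; the symmetry lets us restrict attention to the tractable range $0\le m\le n/2$.

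For such $m$, iterating the identity $\cL(\phi\cdot\psi\circ f)=\psi\cdot\cL\phi$ yields $\cL^n\Phi_m=\cL^{n-m}(\cL^m\hat h\cdot\hat h)=\cL^{n-m}(Q^m\hat h\cdot\hat h)$ (using $\cL\hat h=Q\hat h$ since $\mu(\hat h)=0$). Lemma~\ref{lem:spectral_decomposition} then gives $\cL^n\Phi_m(z)=K^{n-m}\mu(Q^m\hat h\cdot\hat h)+\cO(1)$ uniformly in $m,n,z$. Adapting the comparison from the proof of Lemma~\ref{lem:E(chi_n)}, the total error between $\sum_{\bbp\in\bP_n}\Phi_m(x_\bbp)$ and $\cL^n\Phi_m(z)$ is $\cO(K^m)$, using that $f^m\colon\bP_n^*\to\bP_{n-m}^*$ is $K^m$-to-one and hence $\sum_{\bbp\in\bP_n^*}|f^m(\bbp)|\le K^m$. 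Combined with the duality identity $K^m\mu(\hat h\cdot\hat h\circ f^m)=\mu(Q^m\hat h\cdot\hat h)$, obtained by applying $\mu(\cL^m\cdot)=K^m\mu(\cdot)$ to the same iterated identity, division by $K^n-1$ yields
\[
\bE(\Phi_m)=\mu(\Phi_m)+\cO(K^{m-n})\quad\text{for } 0\le m\le n/2.
\]

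Define $\sigma^2\=\mu(\hat h^2)+2\sum_{m=1}^\infty\mu(\hat h\cdot\hat h\circ f^m)$, which converges absolutely since $|\mu(\Phi_m)|\le CK^{-m}$. Pairing $m$ with $n-m$ via the symmetry and noting that both $\sum_{m=0}^{n/2}K^{m-n}$ and $\sum_{m>n/2}|\mu(\Phi_m)|$ are $\cO(K^{-n/2})$, one obtains $n\bE(\hat\chi_n^2)=\sigma^2+\cO(K^{-n/2})$, which is in fact much stronger than the stated $\cO(n^{-1})$ bound. Positivity $\sigma>0$ follows from Livshits' theorem: $\sigma^2=0$ would force $\hat h$ to be a smooth coboundary, whence $f$ is smoothly conjugate to $x\mapsto Kx$, contradicting the hypothesis. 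The main obstacle is the interplay between the orbit symmetry and the transfer-operator bound: the $\cO(K^m)$ comparison error is ruinous for large $m$, and only the symmetry permits confining the estimate to $m\le n/2$ and thereby converting it into a usable $\cO(K^{m-n})$.
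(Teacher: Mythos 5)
Your proof is correct, and it takes a genuinely different route from the paper's in the way it controls the off-diagonal correlation terms. Both arguments begin by writing $n\bE(\hat\chi_n^2)$ as a combination of $\sum_{\bbp}\Phi_j(x_\bbp)$'s and comparing these to $\cL^n\Phi_j(z)$ with an error of size $\cO(K^j)$ coming from the Lipschitz constant of $\hat h\circ f^j$ on partition intervals. The paper never symmetrizes: it keeps the triangular weights $(1-j/n)$ that fall directly out of expanding $\bigl(\sum_k\hat h\circ f^k\bigr)^2$, so the $\cO(K^j)$ errors appear with a factor that vanishes as $j\to n$, giving $\sum_j(1-j/n)K^j=\cO(K^n/n)$ and hence $\cO(n^{-2})$ after normalization; it then peels off the $j/n$ correction by a separate spectral estimate. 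You instead index by the cyclic lag $m=(j-k)\bmod n$, which produces the uniform weight $n$, and then invoke the orbit symmetry $\bE(\Phi_{n-m})=\bE(\Phi_m)$ to fold the range to $m\le n/2$; there the $\cO(K^{m-n})$ errors sum to $\cO(K^{-n/2})$. Within your decomposition this symmetry is indeed essential (without it the error sum is only $\cO(1)$), though not in the absolute sense you claim, since the paper's weights achieve the same control without it. Your bound $n\bE(\hat\chi_n^2)=\sigma^2+\cO(K^{-n/2})$ is, as you observe, considerably sharper than the $\cO(n^{-1})$ stated in the lemma; the paper's intermediate step $\mu\bigl(\hat h^2+2\sum_{j<n}\Phi_j\bigr)=\sigma^2+\cO(n^{-1})$ is likewise conservative (the tail is really $\cO(K^{-n})$ by the same estimate $|\mu(\Phi_j)|=K^{-j}|\mu(\hat h\cdot Q^j\hat h)|=\cO(K^{-j})$ that you use), so the discrepancy is only in bookkeeping, not in substance. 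Your identification of $\sigma>0$ via Liv\v{s}ic and marked Lyapunov rigidity also matches the paper's.
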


\begin{proof}

Using \eqref{eq:object}, \eqref{Eq:Center}, and Corollary~\ref{Cor:1}, we have:
\begin{equation*}
\Var(\hat \chi_n)=(K^n-1)^{-1} n^{-2}\sum_{\bbp\in \bP_n}\Biggl[\sum_{k=0}^{n-1}\hat h\bigl(f^k(x_\bbp)\bigr)\Biggr]^2+\cO \bigl(K^{-2n} \bigr).
\end{equation*}
But 
\begin{equation*}
\begin{split}
\sum_{\bbp\in \bP_n}\Biggl[\sum_{k=0}^{n-1}\hat h\bigl(f^k(x_\bbp)\bigr)\Biggr]^2 
&= \sum_{\bbp\in \bP_n}\Biggl[\sum_{k=0}^{n-1}\hat h \bigl(f^k(x_\bbp) \bigr)^2+2\sum_{k=0}^{n-2}\sum_{j=k+1}^{n-1}\hat h\bigl(f^k(x_\bbp) \bigr)\hat h \bigl(f^j(x_\bbp)\bigr)\Biggr]\\
&=\sum_{\bbp\in \bP_n}\Biggl[\sum_{k=0}^{n-1}\hat h \bigl(f^k(x_\bbp) \bigr)^2+2\sum_{k=0}^{n-2}\sum_{j=1}^{n-1-k}\bigl( \hat h \cdot \bigl( \hat h\circ  f^j \bigr) \bigr)\bigl(f^k(x_\bbp)\bigr)\Biggr]\\
&=\sum_{\bbp\in \bP_n}\Biggl[\sum_{k=0}^{n-1}\hat h \bigl(f^k(x_\bbp) \bigr)^2+2\sum_{j=1}^{n-1}\sum_{k=0}^{n-j-1}\bigl( \hat h \cdot \bigl( \hat h\circ  f^j \bigr) \bigr)\bigl(f^k(x_\bbp)\bigr)\Biggr]\\
&=\sum_{\bbp\in \bP_n}\Biggl[n \cdot\hat h(x_\bbp)^2+2\sum_{j=1}^{n-1}(n-j)\bigl( \hat h \cdot \bigl( \hat h\circ  f^j \bigr) \bigr)(x_\bbp)\Biggr].
\end{split}
\end{equation*}
Therefore,
\begin{equation}
\label{Eq:Var1}
    \Var(\hat \chi_n) = (K^n-1)^{-1}n^{-1}\sum_{\bbp\in \bP_n}\Biggl[\hat h(x_\bbp)^2+2\sum_{j=1}^{n-1}\biggl(1-\frac{j}{n}\biggr)\bigl( \hat h \cdot \bigl( \hat h\circ  f^j \bigr) \bigr) (x_\bbp)\Biggr] + \cO\bigl(K^{-2n}\bigr).
\end{equation}
Next, we want to compare the expression in \eqref{Eq:Var1} to the $n$-th power of transfer operator~\eqref{eq:transfer_operator}. Note that for every $z \in \T$,
\begin{equation*}
\begin{split}
&\AbsBig{ \cL^n\bigl( \hat h \cdot \bigl( \hat h\circ  f^j \bigr) \bigr)(z)-\sum_{\bbp\in \bP_n}\bigl( \hat h \cdot \bigl( \hat h\circ  f^j \bigr) \bigr)(x_\bbp)} \\
&\qquad=\Absbig{ \bigl( \hat h \cdot \bigl( \hat h\circ  f^j \bigr) \bigr) (\vf_{\bbp^*}(z)) + \sum_{\bbp\in\bP_n} \bigl(\bigl( \hat h \cdot \bigl( \hat h\circ  f^j \bigr) \bigr) (\vf_{\bbp}(z)) -\bigl( \hat h \cdot \bigl( \hat h\circ  f^j \bigr) \bigr)(x_\bbp)\bigr)}\\
&\qquad\le \|\hat h\|^2_{\cC^0} + \|\hat h'\|_{\cC^0}\|\hat h\|_{\cC^0} + \|\hat h\|_{\cC^0} \sum_{\bbp\in\bP_n}  \Absbig{ \bigl( \hat h\circ f^j\circ\vf_{\bbp} \bigr) (z) - \bigl(\hat h\circ f^j \bigr)(x_\bbp)}\\ 
&\qquad\le \|\hat h\|^2_{\cC^0} + \|\hat h'\|_{\cC^0}\|\hat h\|_{\cC^0} +\|\hat h\|_{\cC^0} \|\hat h'\|_{\cC^0}\sum_{\bbp\in\bP_n} \int_{\bbp}\! \bigl(f^j\bigr)' \, \mathrm{d}x \\
&\qquad\le \|\hat h\|^2_{\cC^0} + \|\hat h\|_{\cC^0}\|\hat h'\|_{\cC^0} +\|\hat h\|_{\cC^0} \|\hat h'\|_{\cC^0} \int_{\T} \! \bigl(f^j \bigr)' \, \mathrm{d}x \\
&\qquad\le \|\hat h\|^2_{\cC^0} + \|\hat h\|_{\cC^0}\|\hat h'\|_{\cC^0} \bigl(1 + K^j \bigr).
\end{split}
\end{equation*}
Similarly, for every $z \in \T$,
\begin{equation*}
\begin{split}
\AbsBig{\cL^n \bigl(\hat h^2 \bigr)(z)-\sum_{\bbp\in \bP_n}\hat h(x_\bbp)^2}
&=\AbsBig{\bigl(\hat h \circ \phi_{\bbp^*}\bigr)(z)^2 + \sum_{\bbp\in\bP_n} \bigl(\bigl(\hat h \circ\vf_{\bbp}\bigr)(z)^2 -\hat h(x_\bbp)^2\bigr)} \\
&\le \|\hat h\|^2_{\cC^0} + \|\hat h\|_{\cC^0}\|\hat h'\|_{\cC^0} + \|\hat h\|_{\cC^0} \|\hat h'\|_{\cC^0} \sum_{\bbp\in\bP_n} \abs{\bbp} \\
&\le\|\hat h\|^2_{\cC^0} + 2\|\hat h\|_{\cC^0}\|\hat h'\|_{\cC^0}.
\end{split}
\end{equation*}
Putting the last two computations together into \eqref{Eq:Var1} (and taking into account the linearity of the transfer operator and that $\sum_{j=1}^{n-1} \bigl(1-\frac{j}{n} \bigr)K^j$ is of order $K^n/n$), we obtain:
\begin{equation}
    \label{Eq:Var2}
    \Var(\hat \chi_n) = (K^n - 1)^{-1} n^{-1}\cL^n\biggl(\hat h^2+2\sum_{j=1}^{n-1}\biggl(1-\frac{j}{n}\biggr)\bigl(\hat h \cdot \bigl( \hat h\circ f^j \bigr)\bigr)\biggr)(z)+\cO\bigl(n^{-2}\bigr).
\end{equation}

Let us use the spectral decomposition from Lemma~\ref{lem:spectral_decomposition} to estimate the right-hand side of \eqref{Eq:Var2}. Observe that $\cL^n\bigl(\hat h \cdot \bigl( \hat h\circ f^j \bigr)\bigr) = \cL^{n-j}\bigl( \hat h\cL^j \hat h\bigr)$. Since $\mu\bigl(\hat h\bigr) = 0$, this implies $\cL^n\bigl(\hat h \cdot \bigl( \hat h\circ f^j \bigr)\bigr) = \cL^{n-j}\bigl( \hat h \cdot Q^j\bigl(\hat h\bigr)\bigr)$. Hence,
\begin{equation*}
\sum_{j=1}^n (K^n - 1)^{-1} \frac{j}{n} \Absbig{\cL^n\bigl( \hat h \cdot \bigl( \hat h\circ  f \bigr) \bigr)(z)} = \cO\bigl(n^{-1}\bigr).
\end{equation*}
Thus, \eqref{Eq:Var2} reduces to 
\begin{equation}
    \label{Eq:Var3}
    \Var(\hat \chi_n) = (K^n - 1)^{-1} n^{-1}\cL^n\biggl(\hat h^2+2\sum_{j=1}^{n-1}\bigl(\hat h \cdot \bigl( \hat h\circ f^j \bigr)\bigr)\biggr)(z)+\cO\bigl(n^{-2}\bigr).
\end{equation}

Finally, define the \emph{asymptotic variance of $f$}
\begin{equation}
\label{Eq:Var4}
\sigma^2 
\= \lim_{n \to +\infty} \mu\biggl(\hat h^2+2\sum_{j=1}^{n-1}\bigl(\hat h \cdot \bigl( \hat h\circ f^j \bigr)\bigr)\biggr)
= \mu\biggl(\hat h^2+2\sum_{j=1}^{+\infty}\bigl(\hat h \cdot \bigl( \hat h\circ f^j \bigr)\bigr)\biggr).
\end{equation}
By, e.g., \cite[Equation~(2.11.3)]{PU10}, this limit exists, and by the discussion in \cite[Chapter~2]{PU10}, $\sigma^2 \neq 0$ if and only if $\hat h$ is not cohomologous to $0$. By the marked Lyapunov spectrum rigidity \cite{SS} and Liv\v{s}ic theory, this is possible if and only if $f$ is not smoothly conjugate to a linear map. Hence, by the assumption in the main theorem, $\sigma>0$.

Furthermore, because of the exponential decay of correlations for $f$ \cite{DKL21}, the speed of convergence in \eqref{Eq:Var4} is of order $1/n$, and thus
\begin{equation}
    \label{Eq:Var5}
    \mu\biggl(\hat h^2+2\sum_{j=1}^{n-1}\bigl(\hat h \cdot \bigl( \hat h\circ f^j \bigr) \bigr)\biggr) = \sigma^2+\cO\bigl(n^{-1} \bigr).
\end{equation}

Finally, again by the spectral decomposition, 
\begin{equation*}
\frac{\cL^n\bigl(\hat h^2+2\sum_{j=1}^{n-1}\bigl(\hat h \cdot \bigl( \hat h\circ f^j \bigr) \bigr)\bigr)(z)}{(K^n - 1) n} 
= \frac{K^n  \sigma^2}{(K^n - 1) n}  + \cO\bigl(n^{-2}\bigr)\\ 
= \frac{ \sigma^2}{n} + \cO\bigl(n^{-2}\bigr).
\end{equation*}
Combining this last estimate with \eqref{Eq:Var3}, we get
$\Var(\hat \chi_n) = n^{-1} \cdot \sigma^2 + \cO\bigl(n^{-2}\bigr)$,
as desired.    
\end{proof}

%%%%%%%%%%%%%%%%%%%%%%%%%%%%%%%%%%%%%%%%%%%%%%%%%%%%%%%%%%%%%%%%%%%%%%%%%%%%%%%%%%%%%%%%
%%%%%%%%%%%%%%%%%%%%%%%%%%%%%%%%%%%%%%%%%%%%%%%%%%%%%%%%%%%%%%%%%%%%%%%%%%%%%%%%%%%%%%%%
\section{The distribution and proof of the Main Theorem}
%%%%%%%%%%%%%%%%%%%%%%%%%%%%%%%%%%%%%%%%%%%%%%%%%%%%%%%%%%%%%%%%%%%%%%%%%%%%%%%%%%%%%%%%
%%%%%%%%%%%%%%%%%%%%%%%%%%%%%%%%%%%%%%%%%%%%%%%%%%%%%%%%%%%%%%%%%%%%%%%%%%%%%%%%%%%%%%%%

Let us introduce the random variable $S_n\=\sqrt{n}\hat\chi_n$, and let $\sigma_n^2 \= \Var(S_n)$ be its variance. By Lemma~\ref{Lem:Variance}, $\sigma_n^2 = \sigma^2 + \cO\bigl(n^{-1}\bigr)$, where $\sigma>0$ is the constant from that lemma (the asymptotic variance of $f$).

Let us define the \emph{distribution function}
\begin{equation}\label{eq:Fn def}
F_n(x)\=\Pillow (\{S_n/\sigma \leq x\}) 
= \bE ( \Id_{ \{ S_n/\sigma \le x \} })
\end{equation}
and the corresponding \emph{characteristic function} $\Psi_n \colon \R \to \C$,
\begin{equation*}
\Psi_n(\lambda) \=\bE(\exp (i\lambda  S_n/ \sigma ))=(K^n - 1)^{-1} \sum_{\bbp\in \bP_n}\exp\biggl( \frac{i\lambda}{\sigma \sqrt n}\sum_{k=0}^{n-1} \bigl( \hat h \circ f^k \bigr) (x_\bbp)\biggr).
\end{equation*}
We have $\Psi_n(0)=1$, $\Psi_n'(0)=\cO(\sqrt{n} K^{-n})$ (Corollary~\ref{Cor:1}), $-\Psi_n''(0) = {\sigma_n^2} \big/{\sigma^2} = 1+\cO \bigl(n^{-1} \bigr)$ (Lemma~\ref{Lem:Variance}), and $\Absbig{\Psi_{n}'''(\lambda)} \leq n^{3/2}\norm{\widehat{h}}_{\cC^0} \big/\sigma^3$ for every $\lambda \in \mathbb{R}$. In particular, 
\begin{equation}
    \label{Eq:Psi}
    \Psi_n(\lambda) = 1 - \frac{\lambda^2}{2} + \cO\bigl(\abs{\lambda} \sqrt{n} K^{-n}  +  \abs{\lambda}^2 n^{-1}+\abs{\lambda}^3 n^{ 3/2 }\bigr).
\end{equation}

Let
\begin{equation*}
\cE(\lambda) \= \Absbig{\Psi_n(\lambda) - \exp\bigl(- \lambda^2 \big/2 \bigr)} 
\end{equation*}
be the difference between the characteristic functions of $S_n / \sigma$ and the standard normal distribution. Then, by the Berry--Esseen estimate \cite[Chapter~XVI.3, Equation~(3.13)]{Fe71}, we have
\begin{equation*}
\Absbigg{F_n(x)-\frac{1}{\sqrt{2\pi}}\int_{-\infty}^x \! \exp\biggl(-\frac{y^2}{2}\biggr)\,\mathrm{d}y} 
\le \frac {2}\pi\int_{0}^{T_n} \! \frac{\cE(\zeta)}{\zeta} \,\mathrm{d} \zeta + \frac{24 }{\pi T_n}.
\end{equation*}
Hence, for $T_n = \eps\sqrt[4]{n}$ (where $\eps>0$ will be chosen later uniformly over all $n$),
\begin{equation}
\label{Eq:Int}
F_n(x) 
= \frac{1}{\sqrt{2\pi}}\int_{-\infty}^x \! \exp\biggl(-\frac{y^2}{2}\biggr)\,\mathrm{d}y  
+ C_1 \cdot \int_{0}^{\varepsilon\sqrt[4]{n}} \! \frac{\cE(\zeta)}{\zeta} \,\mathrm{d}\zeta
+ \cO\biggl(\frac{1}{\sqrt[4]{n}}\biggr)
\end{equation}
for some uniform constant $C_1 > 0$. Therefore, to prove the Main Theorem, we need to estimate $\cE(\zeta)$ and show that the middle integral is of order $1/\sqrt[4]{n}$. 

We decompose:
\begin{equation*}
    \int_{0}^{\eps\sqrt[4]{n}} \!\frac{\cE(\zeta)}{\zeta} \,\mathrm{d}\zeta 
    = \int_{0}^{1/n}\! \frac{\cE(\zeta)}{\zeta} \,\mathrm{d}\zeta
    + \int_{1/n}^{\eps\sqrt[4]{n}} \! \frac{\cE(\zeta)}{\zeta} \,\mathrm{d}\zeta 
    \eqqcolon I_1+I_2.
\end{equation*}

As for $I_1$, using \eqref{Eq:Psi} and the expansion $\exp \bigl(-\lambda^2\big/2 \bigr) = 1 - \lambda^2 \big/2 + \cO\bigl(\abs{\lambda}^3\bigr)$, we obtain (again for some uniform constant $C_2>0$)
\begin{equation}
  \label{Eq:Near0}
I_1 = \int_{0}^{1/n} \! \frac{\cE(\zeta)}{\zeta} \,\mathrm{d}\zeta 
\le C_2 \int_{0}^{1/n} \! \biggl(\sqrt{n} \cdot K^{-n} + \frac{\zeta}{n} +\zeta^2 n^{ 3 / 2}\biggr) \,\mathrm{d}\zeta 
= \cO\bigl( n^{-3 / 2} \bigr).  
\end{equation}

As for $I_2$, we cannot apply such a naive strategy. Instead, we will estimate $\cE(\lambda)$ differently. For this purpose, let us introduce a new \emph{twisted operator} acting on $\vf \in \cC^1(\T,\C)$:
\begin{equation*}
\hat \cL_{\lambda/(\sigma\sqrt{n})}(\vf) (x) 
\= K^{-1}\cL \biggl(\exp\biggl( \frac{i\lambda}{\sigma \sqrt{n}}\hat h\biggr) \cdot\vf \biggr) (x)
= K^{-1} \sum_{y \in f^{-1}(x)} \exp\biggl(i\lambda \frac{\hat h(y)}{\sigma\sqrt{n}}\biggr) \cdot\vf(y).
\end{equation*}
Note that $\hat \cL_{0/(\sigma\sqrt{n})} = K^{-1} \cdot \cL$, and hence we want to view $K^{-1} \cdot \hat \cL_{\lambda/(\sigma\sqrt{n})}$ as a \emph{perturbation} of the operator $K^{-1}\cdot \cL$ with perturbation parameter $\lambda / (\sigma\sqrt{n})$. Using the previous notation, for a point $z \in \T$,
\begin{equation*}
\begin{split}
&\Absbig{\Psi_n(\lambda)-\hat\cL_{\lambda/(\sigma\sqrt{n})}^n (\Id)(z)} \\
&\qquad\le \frac{1}{K^{n}} \Absbigg{\sum_{\bbp\in \bP_n}\exp\biggl( \frac{i\lambda}{\sigma \sqrt n}\sum_{k=0}^{n-1} \bigl( \hat h \circ f^k \bigr) (x_\bbp)\biggr)-\exp\biggl( \frac{i\lambda}{\sigma \sqrt n}\sum_{k=0}^{n-1} \bigl( \hat h\circ f^k\circ \vf_\bbp \bigr) (z)\biggr)}  + \frac{2}{K^{n}}\\
&\qquad\le \frac{\abs{\lambda}}{\sigma\sqrt{n} K^n}\sum_{\bbp\in \bP_n} \sum_{k=0}^{n-1}\Absbig{ \bigl( \hat h\circ f^k \bigr) (x_\bbp)-\bigl( \hat h\circ f^k\circ \vf_\bbp\bigr) (z)} + \frac{2}{K^{n}},
\end{split}
\end{equation*}
where the term $2K^{-n}$ comes from a straightforward estimate of the exceptional branch $\vf_{\bbp^*}$ that is present in the transfer operator, but is not accounted for in $\Psi_n$.
%-CK^{-n}

Note that $\abs{f^n(x_\bbp)-(f^n\circ \vf_\bbp)(z)} \le 1$, and thus, by the contraction of the inverse branches of the map, we obtain $\Absbig{f^k(x_\bbp)-\bigl(f^k\circ \vf_\bbp \bigr)(z)}\le \lambda_*^{k-n}$, where $1 < \lambda_* \= \min_{x \in \T} f'(x)$. Hence, there exists a constant $C_3 > 0$, uniform over $n$, such that
\begin{equation*}
\begin{split}
&K^{-n}\sum_{\bbp\in \bP_n} \sum_{k=0}^{n-1} \Absbig{\bigl( \hat h\circ f^k \bigr) (x_\bbp)- \bigl(\hat h\circ f^k\circ \vf_\bbp \bigr)(z)}
\le K^{-n} \|h'\|_{\cC^0} \sum_{\bbp\in \bP_n} \sum_{k=0}^{n-1}\lambda_*^{k-n}
\le C_3.
\end{split}
\end{equation*}
We conclude that for every $z \in \T$,
\begin{equation*}
\Psi_n(\lambda)=\hat\cL_{\lambda/(\sigma\sqrt{n})}^n (\Id) (z)+\cO(\abs{\lambda}/\sqrt{n} ) + \cO(K^{-n}).
\end{equation*}
Therefore,
\begin{equation}
    \label{Eq:E}
    \cE(\zeta) 
    = \Absbig{\Psi_n(\zeta) - e^{-\zeta^2 / 2}} 
    \le \Absbig{\hat\cL_{\zeta/(\sigma\sqrt{n})}^n (\Id) (z) - e^{-\zeta^2 / 2} } + \cO( \zeta/\sqrt{n} ) + \cO(K^{-n})  . 
\end{equation}

To compute $\hat\cL_{\zeta/(\sigma\sqrt{n})}^n (\Id) (z)$, we have to understand its spectrum, in particular the maximal eigenvalue and the derivative of its maximal eigenfunction. This can be done by perturbation theory (see e.g.\ \cite[Section~1]{DKL21}, \cite{Go15}) starting from the spectrum of $K^{-1} \cdot \cL = \hat \cL_0$, which is analyzed in \cite{BKL22}. In this way, we obtain that there exists $\delta >0$ and $\xi \in (0,1)$ so that for $\zeta \in (0,\delta \sqrt{n})$, 
the leading eigenvalue $\kappa(\zeta/(\sigma\sqrt{n}))$ of $\hat \cL_{\zeta/(\sigma \sqrt{n})}$ satisfies
\begin{equation*}
\kappa(t)=1- \sigma^2 t^2/2+\cO\bigl(t^3\bigr), \quad t= \zeta / (\sigma\sqrt{n})
\end{equation*}
(as a perturbation of the leading eigenvalue $1$ for $\hat \cL_0 = K^{-1} \cdot \cL$; here $\sigma$ is exactly the asymptotic variance of $\hat h$). Hence,
\begin{equation*}
\ln \kappa(\zeta / (\sigma \sqrt{n})) 
= - \zeta^2 n^{-1} /2 + \cO\bigl(\zeta^3 n^{-3/2} \bigr), 
\end{equation*}
and we have the following spectral decomposition:
\begin{equation*}
\hat \cL_{\zeta/(\sigma \sqrt{n})}^n(\vf) = (\kappa( \zeta / (\sigma\sqrt{n}) ) )^n \cdot \hat{\Pi}_{\zeta/(\sigma\sqrt{n})}(\vf) + \hat Q_{\zeta/(\sigma\sqrt{n})}^n(\vf),
\end{equation*}
where $\hat{\Pi}_{\zeta/(\sigma\sqrt{n})}$ is the eigenprojection corresponding to $\kappa(\zeta/(\sigma\sqrt{n}))$ and $\hat Q_{\zeta/(\sigma\sqrt{n})} \= \hat \cL_{\zeta/(\sigma \sqrt{n})} - \kappa(\zeta/(\sigma\sqrt{n})) \cdot \hat{\Pi}_{\zeta/(\sigma\sqrt{n})}$. In addition, we have the following estimates:
\begin{gather*}
\hat{\Pi}_{\zeta/(\sigma\sqrt{n})}(\vf) =  \mu(\vf) + \bigO( \norm{\vf}_{\cC^0} \zeta/(\sigma\sqrt{n})) , \\
\Normbig{\hat Q_{\zeta/(\sigma\sqrt{n})}^n(\vf)}_{\cC^1}   
\le C_4\xi^n\norm{\vf}_{\cC^1}.
\end{gather*}
Therefore, for every $z \in \T$,
\begin{equation*}
\hat\cL_{\zeta/(\sigma\sqrt{n})}^n (\Id) (z) 
= \exp\bigl(- \zeta^2/2  + \cO\bigl(\zeta^3/\sqrt{n}\bigr)\bigr)(1 + \bigO(\zeta/\sqrt{n})) + \cO(\xi^n).
\end{equation*}
Note that, for $\delta$ small enough (uniformly in $n$), 
\[
\exp\bigl(- \zeta^2/2  + \cO\bigl(\zeta^3/\sqrt{n}\bigr)\bigr) \le e^{- \zeta^2/2}+\bigl(1-e^{- \zeta^2/2}\bigr)\cO(\zeta/\sqrt{n}) \le e^{- \zeta^2/2}+\cO\bigl(\zeta/\sqrt{n}\bigr),
\]
hence, $\exp\bigl(- \zeta^2/2  + \cO\bigl(\zeta^3/\sqrt{n}\bigr)\bigr)(1 + \bigO(\zeta/\sqrt{n})) \le e^{- \zeta^2/2}+ \bigl(e^{- \zeta^2/2}+1\bigr)\cO(\zeta/\sqrt{n})$.
Combining the estimates above with \eqref{Eq:E}, we get
\begin{equation*}
    \label{Eq:E2}
    \cE(\zeta) \le \zeta \exp\bigl(- \zeta^2/2\bigr)\cO ( 1/\sqrt{n} ) + \cO ( \zeta/\sqrt{n} ) + \cO(\xi^n) + \cO(K^{-n}).
\end{equation*}
Therefore, for $\eps < \delta$,
\begin{equation*}
\begin{split}
    I_2  = \int_{1/n}^{\eps\sqrt[4]{n}} \! \frac{\cE(\zeta)}{\zeta} \,\mathrm{d}\zeta  
         \le \frac{C_5}{\sqrt{n}}\int_{1/n}^{\eps\sqrt[4]{n}} \! e^{-\frac{\zeta^2}{2}}\,\mathrm{d}\zeta + \cO\biggl(\frac{1}{\sqrt[4]{n}}\biggr)+ \cO(n (\xi^n + K^{-n})) .
\end{split}
\end{equation*}
Since the integral in the last estimate is bounded above by $\int_\R \! e^{-\zeta^2\big/2} \, \mathrm{d}\zeta < +\infty$ and $n(\xi^n + K^{-n})$ converges to $0$ exponentially fast, we conclude that $I_2 \le  \cO(1/\sqrt[4]{n})$. Plugging this estimate together with \eqref{Eq:Near0} into \eqref{Eq:Int} yields
\begin{equation*}
F_n(x) 
= \Pillow \biggl(\biggl\{\frac{(\chi_n-\overline{\chi})\sqrt{n}}{\sigma}  \leq x \biggr\} \biggr) 
=  \frac{1}{\sqrt{2\pi}}\int_{-\infty}^x \! \exp\biggl(-\frac{y^2}{2}\biggr)\,\mathrm{d}y  + \cO\biggl(\frac{1}{\sqrt[4]{n}}\biggr),
\end{equation*}
which finishes the proof of the Main Theorem.

\end{document}